\documentclass[a4paper]{article}
\usepackage{authblk}

\usepackage{amsmath,amssymb,amsthm}
\usepackage{a4wide}

\newcommand{\G}{\mathcal{G}}
\newcommand{\C}{\mathcal{C}}
\newcommand{\Z}{\mathcal{Z}}
\newcommand{\B}{\mathcal{B}}
\newcommand{\A}{\mathcal{A}}
\newcommand{\T}{\mathcal{T}}
\newcommand{\F}{\mathcal{F}}
\newcommand{\cS}{\mathcal{S}}
\newcommand{\M}{\mathcal{M}}

\newcommand{\TODO}[1]%
{\ifdetails\par\fbox{\begin{minipage}{0.9\linewidth}\textbf{TODO:}
      #1\end{minipage}}\par\fi}

\newtheorem{lemma}{Lemma}
\newtheorem{prop}[lemma]{Proposition}
\newtheorem{theorem}[lemma]{Theorem}
\newtheorem{corollary}[lemma]{Corollary}
\theoremstyle{remark}

\newtheorem{definition}{Definition}
\newtheorem{conjecture}{{Conjecture}}[section]

\title{Subcritical graph classes containing all planar graphs}

\begin{document}

\author[1]{Stephan Wagner\thanks{Supported by the National Research Foundation of South Africa, grant number 96236.}}

\author[2]{Agelos Georgakopoulos\thanks{Supported by the European Research Council (ERC) under the European Union’s Horizon 2020 research and innovation programme (grant agreement No 639046).}}
 
\affil[1]{Department of Mathematical Sciences\\ Stellenbosch University\\ Private Bag X1, Matieland 7602, South Africa\\ \texttt{swagner@sun.ac.za}}
\affil[2]{{Mathematics Institute}\\
 {University of Warwick}\\
  {CV4 7AL, UK}
}

\maketitle

\begin{abstract}
We construct minor-closed addable families of graphs that are subcritical and contain all planar graphs. This contradicts (one direction of) a well-known conjecture of Noy.

\bigskip

\emph{Keywords}: Subcritical graph class, planar graphs

\emph{2010 MSC}:  Primary: 05C30; Secondary: 05A16, 05C10, 05C80, 05C83
\end{abstract}

\section{Introduction} 

Subcritical classes of (finite, simple) graphs are defined by a technical condition involving their generating functions; we recall the formal definition in the next section. 

Loosely speaking, graphs from  subcritical families can be thought of as ``tree-like''. Indeed, it is shown in \cite{Georgakopoulos2015limits,StuRan} that their Benjamini--Schramm limits are similar to that of random trees. Subcritical graph classes have also been observed to exhibit tree-like behaviour of a different kind:  Panagiotou, Stufler and Weller \cite{Panagiotou2016scaling} showed that the scaling limit of random graphs from subcritical classes, in the Gromov--Hausdorff sense, is Aldous' continuum random tree (see also \cite{StuRan}).

Other properties of subcritical families that have been studied include the degree distribution~\cite{BePaStDeg}, extremal parameters such as the maximum degree and the diameter~\cite{drmota13}, and subgraph counts~\cite{DrmRamRue}. 
Important examples of subcritical graph classes include cacti, outerplanar graphs and series-parallel graphs.

Noy  \cite{Noy2014random} made the following well-known conjecture, which attempts a characterization of the subcritical families that are addable and minor-closed.

\begin{conjecture}[\cite{Noy2014random}]
An addable, minor-closed class of (labelled) graphs is subcritical if and only if it has a planar forbidden minor.
\end{conjecture}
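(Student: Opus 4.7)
Since the paper's abstract announces a counterexample, I sketch how one might construct an addable, minor-closed, subcritical class that contains every planar graph, thereby refuting the forward direction of Noy's conjecture. The first step is to unpack the analytic meaning of subcriticality. If $C(x)$ and $B(x)$ denote the exponential generating functions of the connected and $2$-connected graphs of the class, the block decomposition gives $xC'(x) = x\exp\bigl(B'(xC'(x))\bigr)$; writing $u=xC'(x)$ and inverting, the dominant singularity arises either at a branching point determined by $uB''(u)=1$ or at the intrinsic singularity of $B$. The class is subcritical exactly when this equation admits a solution $u^{*}$ strictly inside the disc of convergence of $B$, and critical when $\lim_{u\to\rho_{B}^{-}} uB''(u)=1$. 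For the planar class this last limit equals one, which is precisely why planar graphs sit at the critical case. The plan is therefore to enlarge $B$, while preserving both the addable-minor-closed structure and the inclusion of all planar graphs, so that $\lim_{u\to\rho_{B}^{-}} uB''(u)$ strictly exceeds one.

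Since addable minor-closed classes correspond to downward-closed families of $2$-connected blocks, and containing all planar graphs forces inclusion of every $2$-connected planar graph, the admissible constructions are excluded-minor classes whose forbidden minors are all non-planar. Natural candidates to test include $K_{t}$-minor-free or $K_{3,t}$-minor-free graphs for large~$t$, graphs of bounded Euler genus, and apex-like constructions built from planar graphs by attaching bounded extra structure at a few marked vertices. Each of these is automatically addable, minor-closed, and strictly larger than the planar class, so the only real content to verify is the subcriticality inequality above.

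The central analytic step is to control $B$ for the chosen candidate well enough to decide whether $\lim_{u\to\rho_{B}^{-}} uB''(u)>1$. Exact enumeration seems out of reach for most non-planar minor-closed classes, so I would work with bounds: start from the known asymptotics of the planar $2$-connected generating function of Gim\'enez--Noy, describe the added non-planar blocks via a controlled side-system, and estimate how they perturb both $\rho_{B}$ and $B''$ near the boundary. Singularity analysis of algebraic systems in the Drmota--Lalley--Woods style, or a direct block-by-block comparison, seems the right technical machinery.

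The main obstacle is the delicate trade-off built into the problem: enlarging the family of $2$-connected graphs simultaneously raises $B''$ pointwise and lowers $\rho_{B}$, and it is not at all clear a priori that the limit $\lim_{u\to\rho_{B}^{-}} uB''(u)$ strictly exceeds one rather than remaining equal to one. Overcoming this requires either (i) selectively admitting only ``large'' non-planar blocks, so that the high-order coefficients of $B$ are affected strongly while the critical substitution value moves only slightly, or (ii) engineering a qualitatively different singularity type for $B$ (for instance a polar rather than square-root singularity) for which the subcritical threshold is automatically crossed. Pinpointing the correct construction of excluded minors, and then pushing the asymptotic analysis through, is the heart of the argument.
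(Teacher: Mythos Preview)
Your analytic framework is correct: subcriticality is guaranteed once $tB''(t)=1$ has a solution strictly inside the disc of convergence of $B$, and one clean way to force this is to make $B''(t)\to\infty$ as $t\to\rho_B^-$. Your option~(ii), changing the singularity \emph{type} of $B$, is exactly the right instinct. The gap is that you have not found the construction that does this, and the concrete candidates you list almost certainly do not.

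The classes you propose to test---bounded genus, $K_t$-minor-free, $K_{3,t}$-minor-free, apex-planar---are all ``planar-like'' at the level of block enumeration: their $2$-connected members have (or are expected to have) coefficient asymptotics of the shape $c\,n^{-7/2}\rho^{-n}n!$, i.e.\ a $(1-x/\rho)^{5/2}$ singularity in $B$. For such a singularity $B''$ stays bounded at $\rho$, so you are thrown back onto the delicate limit you worry about, and in fact bounded-genus classes are known \emph{not} to be subcritical. Enlarging within this regime buys nothing.

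The paper's idea is to move in the opposite direction and adjoin \emph{tree-like} blocks: take $\G_k$ to be all graphs each of whose blocks is either planar or a $2$-connected $k$-apex forest (a graph that becomes a forest after deleting at most $k$ vertices). This family is easily seen to be addable and minor-closed. The point is that $2$-connected $k$-apex forests inherit the $(1-x/\eta_k)^{3/2}$ singularity of the tree/forest generating function, with radius $\eta_k=T(2^{-k}e^{-1})$; one checks this by expressing their count via the bivariate forest generating function in vertices and leaves. For $k\ge 4$ one has $\eta_k<\beta$ (the planar $2$-connected radius from Gim\'enez--Noy), so the apex-forest blocks dominate, $\rho_{B_k}=\eta_k$, and $B_k''(x)\to\infty$ as $x\to\eta_k^-$. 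The equation $tB_k''(t)=1$ then has its solution at some $\tau_k<\eta_k$, and subcriticality follows.

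So the missing idea is: do not try to thicken the planar world, but instead graft on a family of blocks whose own singularity is of forest type. That single choice converts your heuristic~(ii) into an actual proof and removes the ``delicate trade-off'' entirely.
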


A class of graphs $\mathcal{G}$ is called {\em minor-closed} if every minor of a graph $G \in \mathcal{G}$ is again in $\mathcal{G}$. The class $\mathcal{G}$ is called {\em addable}, if it satisfies the following two requirements: 1) the disjoint union of any two graphs $G,H \in \mathcal{G}$ belongs to $\mathcal{G}$, and 2) for every $G \in \mathcal{G}$, the graph obtained by adding an edge between two vertices in distinct  components of $G$ is again an element of $\mathcal{G}$.

Recall that a minor-closed graph class has a planar forbidden minor if and only if it has bounded tree-width \cite{GMV}. Thus, in the light of the above discussion, Noy's conjecture can be interpreted as stating that tree-likeness in the graph minor sense coincides with tree-likeness in the sense of enumerative and probabilistic combinatorics.

The aim of this paper is to provide a counterexample to one direction of Noy's conjecture: we construct addable, minor-closed,  subcritical classes of graphs that contain all planar graphs\footnote{Even more, in Section~\ref{secdisc} we observe that our construction can be generalised to contain all graphs from any fixed proper minor-closed graph family.}. These classes are defined as follows:

\begin{definition}\label{ref:Gkdefi}
We let $\G_k$ be the class of all graphs with the property that every {\em block} (i.e.\ maximal connected subgraph without a cutvertex) is either planar or can be reduced to a forest by removing at most $k$ vertices.
\end{definition}

Verifying that $\G_k$ is indeed addable and minor-closed is not difficult; the main result will be the fact that it is subcritical for $k \geq 4$. Heuristically, the reason is that the ``tree-like'' blocks become more numerous and thus asymptotically more important than the planar blocks, and that $\G_k$ inherits the tree-likeness from them.

The number of graphs with $n$ vertices that can be reduced to a forest by removing at most $k$ vertices, from now on called {\em $k$-apex forests}, has been obtained asymptotically by Kurauskas and McDiarmid \cite{KurMcDRan}. Combining their result with our asymptotic enumeration of the 2-connected ones, we deduce, in Section~\ref{sec 2 conn}, that the probability for a uniformly random $k$-apex forest to be 2-connected decays exponentially in $n$, and we determine the rate of exponential decay for each $k$.

Let us remark that we will only consider labelled graphs in this manuscript. However, the same approach also applies to the unlabelled setting. We also emphasize that this only provides a counterexample to one direction of Noy's conjecture. The other direction (an addable, minor-closed graph class with a forbidden planar minor is subcritical) remains plausible. 

\section{Subcriticality}
A maximal connected subgraph without a cutvertex is called a \emph{block}.
A class $\G$ of graphs is called \emph{block-stable} if it satisfies the following property: a graph $G$ lies in $\G$ if and only if all blocks of $G$ lie in $\G$. Let $\C$ denote the class of all connected graphs in $\G$ and $\B$ the class of all blocks in $\G$. Moreover, let $B(z)$, $C(z)$ and $G(z)$ be the  exponential generating functions associated with $\B$, $\C$ and $\G$ respectively. The three are connected by the functional equations
\begin{equation}\label{eq:subcritical_funeq}
G(z) = \exp(C(z)),\qquad C^{\bullet}(z) = z \exp(B'(C^{\bullet}(z)),
\end{equation}
where $C^{\bullet}(z) = z C'(z)$ is the exponential generating function for \emph{rooted} connected graphs in $\G$. Let $\eta$ and $\rho$ be the radii of convergence of $B(z)$ and $C(z)$ respectively. If $C^{\bullet}(\rho) < \eta$, the class $\G$ is called \emph{subcritical}. This technical condition ensures that $C^{\bullet}$ has a dominant singularity at $\rho$ of square-root type (see \cite{DFKKR}), with many important consequences. As mentioned earlier, subcritical graphs are tree-like in many ways. The main result of this manuscript is:

\begin{theorem}\label{thm:main}
For every $k \geq 4$, the graph class $\G_k$ is subcritical.
\end{theorem}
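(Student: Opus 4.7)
My plan is to verify subcriticality via the standard criterion for block-stable classes: since $C^\bullet(z)=z\exp(B'(C^\bullet(z)))$ can be inverted as $z=y\exp(-B'(y))$, the first critical point of this inversion (where $\tau B''(\tau)=1$) gives the dominant singularity of $C^\bullet$, and $\G_k$ is subcritical iff such a $\tau$ lies strictly inside the radius of convergence $\eta$ of $B$. It suffices to show $\lim_{y\to\eta^-} B''(y)=+\infty$, which forces the existence of such a $\tau<\eta$ by intermediate value.

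The first step is to decompose the block class as $\B=\B_P\cup\B_F$, where $\B_P$ is the class of 2-connected planar graphs and $\B_F$ the class of 2-connected $k$-apex forests; correspondingly $B=B_P+B_F-B_{P\cap F}$, and the radius of $B$ is $\eta=\min(\eta_P,\eta_F)$. From the Gim\'enez--Noy analysis, $\eta_P\approx 0.038$. For $\eta_F$, I would construct a $j$-apex forest with distinguished apex set of size $j\le k$ by choosing an arbitrary graph on the $j$ apices ($2^{\binom{j}{2}}$ choices), arbitrary bipartite edges to $m$ non-apex vertices ($2^{jm}$ choices), and a forest on the non-apex vertices (EGF $F(w)=\exp(T(w))$, with $T$ the tree EGF of radius $1/e$). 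This gives the auxiliary series
\[
A_j(z)=\frac{z^j}{j!}\,2^{\binom{j}{2}}\exp\!\bigl(T(2^j z)\bigr),
\]
of radius $1/(e\cdot 2^j)$, and the dominant term $j=k$ yields exponential growth rate $e\cdot 2^k$ for the entire class of $k$-apex forests. Exhibiting explicit 2-connected configurations (e.g.\ complete graph on the apex set, uniform spanning tree on non-apices, independent random bipartite edges) shows that a positive, at worst polynomial, fraction of configurations is 2-connected as soon as $k\ge 2$, so the exponential growth rate is preserved and $\eta_F=1/(e\cdot 2^k)$. For $k\ge 4$, $\eta_F\le 1/(16e)\approx 0.023<\eta_P$, hence $\eta=\eta_F$.

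The last step is to transfer the square-root singularity of $T$ at $1/e$, $T(z)=1-\sqrt{2(1-ez)}+O(1-ez)$, through the composition to obtain $A_k(z)=\alpha_0+\alpha_1(\eta-z)^{1/2}+O(\eta-z)$ near $z=\eta$, and to argue that $B_F$ inherits the same expansion $B_F(z)=\beta_0+\beta_1(\eta-z)^{1/2}+O(\eta-z)$. It then follows that $B_F''(z)\sim\gamma(\eta-z)^{-3/2}\to+\infty$ as $z\to\eta^-$. Since $B_P$ and $B_{P\cap F}$ are analytic on an open disk of radius $\eta_P>\eta$, their second derivatives remain bounded at $\eta$, so $B''(\eta^-)=+\infty$, and subcriticality follows.

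The main obstacle is the passage from the auxiliary EGF $A_j$, which weights each graph by the number of valid apex sets, to the genuine EGF $B_F$ of 2-connected $k$-apex forests. Rigorously establishing that $B_F$ inherits both the radius $1/(e\cdot 2^k)$ and the square-root singular type from $A_k$ requires an inclusion--exclusion over apex sets combined with a concentration argument showing that the 2-connected fraction of $k$-apex forests (for given random bipartite edges to $k\ge 2$ apices) is bounded below by a polynomial in $n$; one can presumably do this in conjunction with the Kurauskas--McDiarmid asymptotic for $k$-apex forests. The hypothesis $k\ge 4$ is sharp for the present approach because this is the first $k$ at which the numerical inequality $1/(e\cdot 2^k)<\eta_P$ holds, making $\eta_F$ the dominant singularity.
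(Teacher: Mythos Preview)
Your high-level plan coincides with the paper's: show $B''(y)\to\infty$ as $y\to\eta^-$, deduce a solution $\tau<\eta$ of $\tau B''(\tau)=1$, and conclude subcriticality. The gap is in your computation of the radius $\eta_F$ of $B_F$.

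Your claim that ``a positive, at worst polynomial, fraction of configurations is 2-connected'' is false, and this is exactly where the argument breaks. For a graph built from a forest plus $k$ apex vertices to be 2-connected, every \emph{leaf} of the forest must be joined to at least one apex; otherwise the leaf has degree~1. A uniformly random forest on $m$ vertices has $\Theta(m)$ leaves (the fraction concentrates around $1/e$), and for each leaf the probability of being joined to some apex under independent bipartite edges is $1-2^{-k}$. Hence the 2-connected fraction is exponentially small, of order $(1-2^{-k})^{cm}$, not polynomial. The paper states this explicitly (Section~4): the probability that a random $k$-apex forest is 2-connected decays like $\Theta(e^{-\eta_k n})$. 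Consequently $\eta_F$ is \emph{not} $1/(e\,2^k)$; it is strictly larger, namely the solution $\eta_k$ of $2^k\eta\,e^{-\eta}=1/e$, equivalently $\eta_k=T(1/(2^k e))$. The paper obtains this by tracking the number of leaves with a second variable: the upper-bound EGF is $\sum_{r\le k}2^{\binom{r}{2}}\frac{x^r}{r!}\,f(2^r x,\,1-2^{-r})$, where $f(z,u)$ is the bivariate forest EGF with $u$ marking leaves, and the substitution $u=1-2^{-r}$ encodes the constraint that each leaf hits at least one apex. The resulting singularity is at the solution of $2^k x e^{-x}=1/e$, not of $2^k x=1/e$.

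A secondary correction: even for your auxiliary series $A_k$, the dominant singular term is of order $(1-z/\eta)^{3/2}$, not $(1-z/\eta)^{1/2}$. This is because the forest EGF is $\exp(t(w))$ with $t(w)=T(w)-T(w)^2/2$, and in this combination the $\sqrt{1-ew}$ term of $T$ cancels (see the paper's expansion~\eqref{eq:unrooted_expansion}). This does not affect the conclusion $B''\to\infty$, since twice differentiating a $3/2$-power still diverges, but your stated expansion $B_F=\beta_0+\beta_1(\eta-z)^{1/2}+O(\eta-z)$ is incorrect. Numerically the threshold $k\ge4$ survives with the correct radius ($\eta_4\approx0.02354<0.03819\approx\eta_P$, while $\eta_3\approx0.048>\eta_P$), so your heuristic about sharpness happens to give the right answer, but for the wrong value of $\eta_F$.
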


The main idea is as follows: we first determine the radius of convergence $\eta_k$ of the exponential generating function $B_k(z)$ associated with the blocks of $\G_k$. In doing so, we show that planar blocks form a negligible part of the set of possible blocks. Finally, we prove that $B''(z)$ goes to infinity as $z \to \eta_k^-$. This in turn is used to prove subcriticality.

\section{Proofs}

To prove that the classes of Definition~\ref{ref:Gkdefi} contradict Noy's conjecture, we have to show that they are addable, minor-closed and ---for $k\geq 4$--- subcritical. We start with the first two properties, which are easier to prove.

\begin{prop}\label{prop_add_mc}
For every positive integer $k$, the graph class $\G_k$ is both addable and minor-closed.
\end{prop}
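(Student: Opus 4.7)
My plan is to verify the two properties separately, since both reduce directly to how the block decomposition interacts with the relevant operations.

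For the addable part, I would observe first that the blocks of a disjoint union $G_1\sqcup G_2$ are exactly those of $G_1$ together with those of $G_2$, so membership in $\G_k$ is inherited. For the edge-addition requirement, adding an edge $uv$ between distinct components of some $G\in\G_k$ creates exactly one new block, namely the bridge $uv\cong K_2$, while leaving all other blocks untouched; since $K_2$ is planar (and also trivially a $k$-apex forest), the resulting graph again lies in $\G_k$.

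For minor-closedness I would combine two ingredients. The first is that the property ``planar or $k$-apex forest'' is itself minor-closed. Planarity is minor-closed by Wagner's theorem, and if $H$ is a minor of a $k$-apex forest $G$ witnessed by a set $S\subseteq V(G)$ of size at most $k$, taking $S':=\{x\in V(H):V_x\cap S\ne\emptyset\}$ (where the $V_x$ are the branch sets defining the minor) yields a set of size at most $k$ whose removal leaves $H-S'$ as a minor of the forest $G-S$, hence a forest. The second, more delicate ingredient is that every block of a minor of $G$ is isomorphic to a minor of some single block of $G$. This I would verify by reducing to the three elementary minor operations and tracking the block structure in each case: vertex deletion and edge deletion turn new blocks into subgraphs of old blocks, while edge contraction $G/e$ with $e$ in a block $B$ yields blocks that are either unchanged blocks of $G$ (with the endpoints of $e$ relabeled as the merged vertex) or the contracted block $B/e$. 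Combining the two ingredients with the definition of $\G_k$ gives the result.

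The main obstacle is the edge-contraction case: one has to check carefully that $B/e$ really is a maximal $2$-connected subgraph of $G/e$, rather than being absorbed into some larger block formed by merging $u$ and $v$. The key observation is that in $G$ the block $B$ meets every other block in at most the single cutvertex they share, so identifying $u$ and $v$ in the quotient cannot produce new internally disjoint paths between $V(B)$ and the rest of $G$. Once this is settled, and once one notes that $B/e$ is itself $2$-connected (a standard fact about contractions of $2$-connected graphs on at least three vertices), the remaining verifications are routine.
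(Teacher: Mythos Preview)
Your approach is essentially that of the paper: verify addability directly from the block decomposition, and verify minor-closedness by checking the three elementary minor operations and showing that each new block is a minor of some old block. Your packaging into two ingredients (the block property is itself minor-closed; blocks of a minor are minors of blocks) is clean, and your branch-set argument for the $k$-apex-forest property is a detail the paper leaves implicit.

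There is, however, one incorrect claim. You state as a ``standard fact'' that if $B$ is $2$-connected on at least three vertices then $B/e$ is again $2$-connected. This holds for multigraphs but fails for simple graphs, which is the setting here. Take $B=K_4$ minus an edge, say on $\{1,2,3,4\}$ with the edge $12$ absent, and contract $e=34$. After suppressing the resulting parallel edges you obtain the path $1\,$--$\,w\,$--$\,2$, so $w$ is a cutvertex and $B/e$ is not a block of $G/e$. Thus your description of the block structure of $G/e$ (``unchanged blocks of $G$ together with the single block $B/e$'') is not correct in general.

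The fix is immediate and is exactly what the paper does: the blocks of $G/e$ that are not already blocks of $G$ are \emph{subgraphs} of $B/e$, hence minors of $B$. Since your first ingredient is already in place, this weaker statement suffices. The ``main obstacle'' you worried about---$B/e$ being swallowed by a larger block---was never the danger; your cutvertex argument there is fine and in fact shows that the new blocks sit inside $B/e$. Replace the $2$-connectivity claim with this observation and the proof goes through.
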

 
\begin{proof}
To prove that $\G_k$ is addable, note first that disjoint unions of graphs in $\G_k$ are trivially elements of $\G_k$ again. Next consider any graph $G \in \G_k$, and let $G'$ be obtained from $G$ by connecting two vertices in distinct connected components of $G$ by an edge. Note that the blocks of $G'$ are the blocks of $G$ and the newly added edge. Since a single edge is a planar graph and thus an allowed block, the graph $G'$ still lies in $\G_k$. Hence the class is addable.

To show that $\G_k$ is also minor-closed, we need to prove that it is closed under the operations of removing a vertex, removing an edge, or contracting an edge. When an edge or a vertex is removed, all blocks of the new graph are subgraphs of blocks of the old graph. Every subgraph of a planar graph is again planar. Likewise, if a graph can be reduced to a forest by removing at most $k$ vertices, so can any subgraph (by removing the same vertices or -- if some of them are not part of the subgraph -- a subset thereof). Thus the condition in Definition~\ref{ref:Gkdefi} remains valid if vertices or edges are removed.

The last operation to consider is contraction of edges; it suffices to consider the block that contains the contracted edge. If this block is planar, it remains so after the edge contraction. If the block can be reduced to a forest by removing at most $k$ vertices, then this is still true after the edge contraction: removing the same vertices (possibly one less, because two of them have been reduced to a single vertex) yields the original forest, a subgraph thereof (again a forest), or the original forest with a contracted edge (which is also a forest). In each of these cases, all newly created blocks (which are subgraphs of the old block with the contracted edge) satisfy the condition of Definition~\ref{ref:Gkdefi}, completing our proof.
\end{proof}

We now proceed with the proof of the main result of this paper, Theorem~\ref{thm:main}.
Recall that there are $n^{n-2}$ labelled trees with $n$ vertices and consequently $n^{n-1}$ rooted labelled trees. The exponential generating function associated with rooted labelled trees is sometimes called the \emph{tree function}:
$$T(z) = \sum_{n=1}^{\infty} \frac{n^{n-1}}{n!} z^n.$$
The tree function is closely related to the \emph{Lambert $W$-function}, which is defined using the functional equation $W(z) e^{W(z)} = z$. This equation defines a multivalued function, and if we let $W$ denote its principal branch, we can express $W$ as the power series $W(z) = \sum_{n=1}^{\infty} \frac{(-n)^{n-1}}{n!} z^n$, see \cite{CGHJK,JosDer}. Thus we have $T(z) = -W(-z)$. Hence $T(z)$ satisfies the functional equation $T(z) = z e^{T(z)}$.


The exponential generating function for unrooted (labelled) trees is given by
\begin{equation}\label{eq:unrooted_gf}
t(z) = \sum_{n=1}^{\infty} \frac{n^{n-2}}{n!} z^n = \int_0^z \frac{T(u)}{u}\,du = T(z) - \frac{T(z)^2}{2}.
\end{equation}
Note here that $T(z)$ represents rooted trees, while $\frac{T(z)^2}{2}$ is the exponential generating function for edge-rooted trees (equivalent to unordered pairs of rooted trees). Since the number of vertices of a tree is always the number of edges plus $1$, the difference yields exactly the exponential generating function for unrooted trees. A forest is a collection of trees, hence the exponential generating function associated with the class of all forests is $f(z) = \exp(t(z))$. We will denote the class of all labelled unrooted trees by $\T$ and the class of all labelled forests by $\F$.

In the following, we need bivariate versions of $T,t,f$ that also involve the number of leaves: let $T(z,u)$, $t(z,u)$ and $f(z,u)$ be those three exponential generating functions, where the exponent of $u$ equals the number of leaves. The symbolic method described in Part A of~\cite{Flajolet2009analytic} can be used to obtain functional equations for these functions. First of all, we have (cf. \cite[Theorem 3.13]{Drmota2009random})
\begin{equation}\label{eq:bivariate_fun_eq}
T(z,u) = z \exp(T(z,u)) + (u-1)z,
\end{equation}
which follows from the fact that the number of leaves of a rooted tree equals the sum of the number of leaves over all its branches (the root only counts as a leaf in this context if it is the only vertex), unless the tree consists of the root only. The last term $(u-1)z$ takes this into account. 
The functional equation~\eqref{eq:bivariate_fun_eq} has the explicit solution
\begin{equation}\label{eq:Tzu_explicit}
T(z,u) = (u-1)z + T(z e^{(u-1)z}).
\end{equation}
Furthermore, we have
\begin{equation}\label{eq:tzu_explicit}
t(z,u) = T(z,u) + (u-1)z T(z,u) - \frac{T(z,u)^2}{2},
\end{equation}
the explanation being similar to~\eqref{eq:unrooted_gf}: the first term stands for rooted trees. The second term corrects for the fact that we did not let the root count as a leaf in $T(z,u)$ unless it was the only vertex. The last term represents edge-rooted trees. Finally, since forests are simply collections of trees, we have
\begin{equation}\label{eq:fzu_explicit}
f(z,u) = \exp(t(z,u)).
\end{equation}
In the following, we will make use of the fact that $T$ is analytic in the complex plane, except for a branch cut along the positive real axis, starting at $\frac{1}{e}$ \cite{CGHJK,JosDer}. Its asymptotic expansion at the branch point $\frac1{e}$ is given by (see \cite[(4.22)]{CGHJK})
\begin{equation}\label{eq:tree_expansion}
T(z) = 1 - \sqrt{2(1-e z)} + \frac{2}{3}(1-ez) - \frac{11\sqrt{2}}{36} (1-ez)^{3/2} + O \Big( \Big(1- e z\Big)^2 \Big),
\end{equation}
valid in an any fixed neighbourhood of the branch point $\frac1{e}$ with the real numbers greater than $\frac1{e}$ removed. This will allow us to apply the principles of singularity analysis \cite[Chapter VI]{Flajolet2009analytic} to some of the generating functions we encounter. It will be important later that the term involving $\sqrt{1-e z}$ vanishes in the generating function $t(z)$ as given in~\eqref{eq:unrooted_gf}: a simple calculation shows that
\begin{equation}\label{eq:unrooted_expansion}
t(z) = T(z) - \frac{T(z)^2}{2} = \frac12 - (1- e z) + \frac{2\sqrt{2}}{3} (1-ez)^{3/2} + O \Big( \Big(1- e z\Big)^2 \Big).
\end{equation}

Now let $B_k(z)$ be the exponential generating function associated with blocks in $\G_k$, and let $A_k(z)$ be the exponential generating function for the ``second type'' of blocks in $\G_k$, i.e., 2-connected graphs that can be reduced to a forest by removing at most $k$ vertices. This set of graphs is denoted by $\A_k$.
In our first lemma, we bound the number of elements of $\A_k$, which in turn gives an estimate for the radius of convergence.

\begin{lemma}\label{lem:upper}
Let $\eta_k = T(\frac{1}{2^k e})$ be the smallest positive solution to the equation $2^k \eta = e^{\eta-1}$. There exists a positive constant $K_1$ (depending on $k$) such that $\A_k$ contains at most $K_1 n^{-5/2} \eta_k^{-n}n!$ elements with $n$ vertices for all positive integers $n$.
\end{lemma}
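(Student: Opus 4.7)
The plan is to parametrise each graph $G\in\A_k$ of order $n$ by a choice of apex set $S\subseteq V(G)$ of size $s\leq k$ with $G-S$ a forest; such an $S$ exists by definition, so $|\A_k^{(n)}|\le\sum_{s=0}^{k} N_{n,s}$, where $N_{n,s}$ counts triples $(S,F,N)$ with $S\subseteq[n]$ a labelled set of size $s$, a choice of edges inside $S$ (at most $2^{\binom{k}{2}}$ options), a forest $F$ on $V\setminus S$, and a function $N\colon V\setminus S\to 2^{S}$ specifying the neighbours in $S$. The 2-connectedness of $G$ forces every leaf of $F$ to satisfy $N(v)\neq\emptyset$ (else $v$ would have degree $1$ in $G$), so each leaf of $F$ contributes at most $2^{s}-1$ subsets while each non-leaf contributes at most $2^{s}$. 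The stronger constraint that isolated vertices of $F$ need $|N(v)|\ge 2$ can be dropped, since an upper bound suffices.

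The pair $(F,N)$ with this weighting is encoded by the bivariate EGF $g_s(z):=f\bigl(2^{s}z,\,(2^{s}-1)/2^{s}\bigr)$, derived from~\eqref{eq:fzu_explicit}: indeed $z\mapsto 2^{s}z$ assigns every vertex weight $2^{s}$ and $u\mapsto(2^{s}-1)/2^{s}$ reduces the weight of each leaf to $2^{s}-1$. To locate the dominant singularity I would use~\eqref{eq:Tzu_explicit}: the singularity of $T(\cdot,u)$ in its first argument lies at $z e^{(u-1)z}=1/e$, and under our substitutions this collapses to $z e^{-z}=1/(2^{s}e)$, whose smallest positive solution is $T\bigl(1/(2^{s}e)\bigr)=\eta_s$. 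For the coefficient asymptotics I would invoke singularity analysis~\cite[Ch.~VI]{Flajolet2009analytic}: by~\eqref{eq:unrooted_expansion}, $t(w)$ has a singular term of type $(1-ew)^{3/2}$ at $w=1/e$, and since $w(z)=ze^{(u-1)z}$ is analytic with non-vanishing derivative at $z=\eta_s$, this transfers to a $(1-z/\eta_s)^{3/2}$ singularity of $t(z,u)$, which is preserved by the outer exponential in $g_s$. The transfer theorem then yields $[z^{m}]g_s(z)\le C_s\,m^{-5/2}\,\eta_s^{-m}$ for all $m\ge 1$ and some $C_s>0$.

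Assembling the pieces, $N_{n,s}\leq\binom{n}{s}\,2^{\binom{k}{2}}\,(n-s)!\,[z^{n-s}]g_s(z)\leq C'_s\,(n!/s!)\,(n-s)^{-5/2}\,\eta_s^{-(n-s)}$. Summing over $s\in\{0,\ldots,k\}$ and noting that $\eta_s=T\bigl(1/(2^{s}e)\bigr)$ strictly decreases in $s$ (since $T$ is increasing on $[0,1/e]$ while $1/(2^{s}e)$ is decreasing), the $s=k$ term dominates and produces $|\A_k^{(n)}|\le K_1\,n^{-5/2}\,\eta_k^{-n}\,n!$ after absorbing constants and small-$n$ corrections into $K_1$. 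The main obstacle I foresee is the singularity-transfer step: one must verify that $\eta_s$ is the unique singularity of $g_s$ of minimal modulus and that $g_s$ continues analytically into a suitable indented neighbourhood of $\eta_s$, so that the local $(1-z/\eta_s)^{3/2}$ behaviour governs the coefficient asymptotics. Given the explicit structure of $g_s$ as an exponential of an analytic composition with $T$, this is standard but needs to be checked; the remaining calculations are bookkeeping.
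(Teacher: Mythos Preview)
Your proposal is correct and follows essentially the same approach as the paper: the same overcounting via (apex set, forest, neighbour-function) triples, the same bivariate generating function $f(2^{s}z,1-2^{-s})$, the same identification of the dominant singularity $\eta_s$ via $ze^{-z}=1/(2^{s}e)$, and the same $(1-z/\eta_s)^{3/2}$ transfer to obtain $n^{-5/2}\eta_k^{-n}n!$ asymptotics. The one point you flag as an obstacle---verifying that $\eta_s$ is the unique singularity of minimal modulus---is exactly where the paper supplies additional detail, showing that $x\mapsto xe^{-x}$ maps no non-real point of the open unit disc to the real axis (since $|y(\cot y+i)|=|y/\sin y|\geq 1$), so the branch cut of $T$ is only hit at $\eta_s$.
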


\begin{proof}
Every element of $\A_k$ that is not just a single edge consists of a forest $F$ and $r$ additional vertices, $1 \leq r \leq k$; each leaf of the forest needs to be adjacent to at least one of the additional vertices, for otherwise the minimum degree would be $1$, making it impossible for the graph to be $2$-connected. Let $\ell(F)$ denote the number of leaves of $F$. Given $F$ and $r$, there are $2^{\binom{r}{2}}$ possibilities for the edges between the additional vertices, there are $2^r-1$ possible ways to connect a leaf of $F$ to the additional vertices (any possible set of edges except for the empty set) and $2^r$ possible ways to connect each other vertex of $F$. Therefore, an upper bound for the number of elements of $\A_k$ with $n$ vertices is given by
$$\sum_{r=1}^k \sum_{\substack{F \in \F \\ |F| = n-r}} \binom{n}{r} 2^{\binom{r}{2}} (2^r-1)^{\ell(F)}(2^r)^{|F|-\ell(F)} = \sum_{r=1}^k \sum_{\substack{F \in \F \\ |F| = n-r}} \binom{n}{r} 2^{\binom{r}{2}} (1 - 2^{-r})^{\ell(F)}(2^r)^{|F|} $$
(the binomial coefficient $\binom{n}{r}$ takes the possible ways to assign labels to the $r$ special vertices into account). Note that this is indeed just an upper bound rather than the exact number: not all graphs obtained in this way are $2$-connected, and there is also some double-counting, see the discussion in the following lemma. The exponential generating function associated with this estimate is
\begin{equation}\label{eq:upper_est}
U_k(x) = \sum_{r=1}^k 2^{\binom{r}{2}} \frac{x^r}{r!} \sum_{m \geq 0} \sum_{\substack{F \in \F \\ |F| = m}} \frac{(2^r x)^m}{m!}  (1-2^{-r})^{\ell(F)} = \sum_{r=1}^k 2^{\binom{r}{2}} \frac{x^r}{r!} f \big( 2^r x, 1-2^{-r}\big).
\end{equation}
Note that $f \big( 2^r x, 1-2^{-r}\big)$ has positive coefficients, so by Pringsheim's Theorem it must have a positive real singularity on its circle of convergence.
In view of~\eqref{eq:Tzu_explicit},~\eqref{eq:tzu_explicit} and~\eqref{eq:fzu_explicit}, $f(2^r x,1-2^{-r})$ inherits its singularities from 
\begin{equation}\label{T_simplified}
T(2^r x,1-2^{-r}) = T(2^r x e^{-x}) - x.
\end{equation}
Note that $x e^{-x}$ is real if and only if either $x$ is real or $x$ is of the form $y(\cot y + i)$ for some real $y$. Since $|y(\cot y +i)| = \big|\frac{y}{\sin y} \big| \geq 1$, the only part of the open unit disk that is mapped to the real axis by the function $x \mapsto 2^r x e^{-x}$ is the real interval $(-1,1)$. Moreover, this function is increasing on $(-1,1)$. It follows that the function $T(2^r x e^{-x})$ is analytic on the unit disk, except for a branch cut along the positive real axis starting at the solution $\eta_r = T(\frac{1}{2^r e})$ of the equation $2^r x e^{-x} = \frac1{e}$.

Since this is decreasing as a function of $r$, the term $r=k$ in~\eqref{eq:upper_est} dominates the rest. Now combine~\eqref{eq:tzu_explicit} and~\eqref{T_simplified} to obtain
$$t(2^k x, 1- 2^{-k}) = (1-x) T(2^k x, 1- 2^{-k}) - \frac{T(2^k x, 1- 2^{-k})^2}{2} = T(2^k x e^{-x}) - \frac {T(2^k x e^{-x})^2}{2} - x + \frac{x^2}{2}.$$
Now we can make use of~\eqref{eq:unrooted_expansion}. Observe also that we have the Taylor expansion
$$2^k x e^{-x} = \frac1{e} \Big(1 + \frac{1-\eta_k}{\eta_k} (x - \eta_k) + O \big( (x-\eta_k)^2 \big) \Big)$$
around the point $\eta_k$. Putting everything together, we find that
$$t(2^k x, 1- 2^{-k}) = \frac{(1-\eta_k)^2}{2} + \gamma \Big(1-\frac{x}{\eta_k}\Big) + 
\frac{2\sqrt{2}}{3} (1-\eta_k)^{3/2} \Big(1-\frac{x}{\eta_k}\Big)^{3/2} + O \Big( \Big(1-\frac{x}{\eta_k}\Big)^2 \Big)$$
for a constant $\gamma$ (that can also be determined, but it is irrelevant for us). Finally, in view of~\eqref{eq:fzu_explicit},
$$f(2^k x, 1- 2^{-k}) = e^{(1-\eta_k)^2/2} \bigg( 1 + \gamma\Big(1-\frac{x}{\eta_k}\Big)  + \frac{2\sqrt{2}}{3} (1-\eta_k)^{3/2} \Big(1-\frac{x}{\eta_k}\Big)^{3/2} + O \Big( \Big(1-\frac{x}{\eta_k}\Big)^2 \Big) \bigg).$$
Recall that all terms with $r < k$ in~\eqref{eq:upper_est} are asymptotically irrelevant since their smallest singularities are all greater than $\eta_k$. The factor $2^{\binom{k}{2}} \frac{x^k}{k!}$ in~\eqref{eq:upper_est} results in an additional factor $2^{\binom{k}{2}} \frac{\eta_k^k}{k!}$ in the dominant singular term of order $(1- x/\eta_k)^{3/2}$. Hence we obtain that \begin{equation}\label{eq:asymp-exp}
U_k(x) = a_k + b_k\Big(1-\frac{x}{\eta_k}\Big) + c_k \Big(1-\frac{x}{\eta_k}\Big)^{3/2} + O \Big( \Big(1-\frac{x}{\eta_k}\Big)^2 \Big)
\end{equation}
for suitable constants $a_k,b_k,c_k$. Specifically, $c_k = 2^{\binom{k}{2}}\frac{2\sqrt{2}}{3k!} e^{(1-\eta_k)^2/2}(1-\eta_k)^{3/2}\eta_k^k$. This is valid in the intersection of a neighbourhood of $\eta_k$ with the slit plane that has all real numbers greater than $\eta_k$ removed. Apart from this branch cut, $U_k(x)$ is an analytic function for $|x| < \eta_{k-1}$. Hence \cite[Theorem VI.4]{Flajolet2009analytic} yields
\begin{equation}\label{eq:asymp-formula}
[x^n]U_k(x) \sim \frac{c_k}{\Gamma(-3/2)} n^{-5/2} \eta_k^{-n}
\end{equation}
as $n \to \infty$, which implies the desired result.
\end{proof}

Next, we provide a lower bound for $\A_k$ that is of the same order as the upper bound of Lemma~\ref{lem:upper}.
\begin{lemma} \label{lem:lower}
Let $\eta_k = T(\frac{1}{2^k e})$ be as defined in the previous lemma. There exists a positive constant $K_2$ (depending on $k$) such that $\A_k$ contains at least $K_2 n^{-5/2} \eta_k^{-n}n!$ elements with $n$ vertices for all sufficiently large positive integers $n$.
\end{lemma}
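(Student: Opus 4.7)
The plan is to parallel the proof of Lemma~\ref{lem:upper}: exhibit an explicit subfamily of $\A_k$ whose exponential generating function has the same square-root type singularity at $\eta_k$, and invoke the same singularity analysis. Concretely, I would consider the construction in which one chooses a distinguished $k$-subset $K\subseteq\{1,\ldots,n\}$ of apex vertices, makes $K$ into a clique, picks a labelled tree $T$ on the remaining $n-k$ vertices, and adds an arbitrary set $E'$ of edges between $T$ and $K$ subject only to the requirement that every leaf of $T$ has at least one neighbour in $K$. Transcribed into generating functions as in~\eqref{eq:upper_est}, but using a tree instead of a forest, the corresponding exponential generating function is
\[
V_k(x) = \frac{x^k}{k!} \, t(2^k x, 1 - 2^{-k}).
\]
The same manipulations used in Lemma~\ref{lem:upper} via~\eqref{eq:tzu_explicit},~\eqref{T_simplified} and the local expansion~\eqref{eq:unrooted_expansion} then show that $V_k$ admits a singular expansion $V_k(x) = a'_k + b'_k(1-x/\eta_k) + c'_k(1-x/\eta_k)^{3/2} + O((1-x/\eta_k)^2)$ at $\eta_k$ with $c'_k \neq 0$, and transfer theorems yield $n![x^n]V_k(x) \sim K_0\, n^{-5/2}\eta_k^{-n} n!$.

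To convert this count into a lower bound for $|\A_k|$ I would then check that almost every configuration $(K,T,E')$ produces a valid 2-connected graph, and that distinct configurations essentially produce distinct graphs. The graph $G = K\cup T\cup E'$ is always connected because the clique $K$ together with the edge from any leaf of $T$ to $K$ spans it; it fails to be 2-connected precisely when some apex vertex $v$ is a cut vertex, i.e.\ when every edge of $E'$ is incident to $v$. Summed over $v\in K$, such bad configurations are enumerated by a generating function proportional to $\frac{x^k}{k!} \, t(2x,1/2)$, whose dominant singularity is $T(1/(2e))>\eta_k$, so they are exponentially subdominant and can be safely discarded. To control overcounting I would further restrict the construction by requiring each apex vertex to have at least $\log n$ tree-neighbours. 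A Chernoff-type concentration argument (the typical number of tree-neighbours of an apex is of order $n$) shows this restriction discards only an exponentially small fraction of configurations. In the restricted subfamily $K$ becomes the unique $k$-clique of $G$ whose removal leaves a tree: any alternative $k$-set $K'$ must differ from $K$ by swapping $j\in\{1,2\}$ apex vertices for tree vertices (for $j\geq 3$ the residual clique $K\setminus K'$ already contains a triangle and is not a forest), and the high apex-degree condition forces too many cycles through the remaining apex vertices for $G\setminus K'$ to be a tree.

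The hardest part will be the uniqueness argument; the generating-function analysis and the 2-connectivity check are essentially reruns of the computations in Lemma~\ref{lem:upper}. Subtracting the two negligible error terms from $V_k$ leaves a contribution of order $n^{-5/2}\eta_k^{-n}n!$ to $|\A_k|$, from which the claimed lower bound $K_2\, n^{-5/2}\eta_k^{-n}n!$ follows.
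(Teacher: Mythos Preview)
Your construction, generating function $V_k(x)=\frac{x^k}{k!}\,t(2^kx,1-2^{-k})$, singular expansion at $\eta_k$, and treatment of the non-$2$-connected configurations via $\frac{x^k}{(k-1)!}\,t(2x,\tfrac12)$ are exactly what the paper does (the paper calls your $V_k$ by the name $L_k$). The only substantive difference is the double-counting step, and there your argument has a gap.

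You restrict to configurations in which every apex vertex has at least $\log n$ tree-neighbours and then claim that for $j\in\{1,2\}$ the ``high apex-degree condition forces too many cycles through the remaining apex vertices''. For $j=1$ this is not true. If $K'=(K\setminus\{v\})\cup\{w\}$, then $G\setminus K'$ consists of the forest $T-w$ together with $v$ and the edges from $v$ into $T-w$; this graph is a tree precisely when $v$ has exactly one neighbour in each component of $T-w$. A single vertex of high degree creates no cycle by itself, so a $\log n$ lower bound on the degree of $v$ only forces $\deg_T(w)\ge\log n-1$, which is not a contradiction. (Concretely: take $T$ a star with centre $w$, let $v$ be adjacent to all leaves, and let $w$ be adjacent to all of $K\setminus\{v\}$; then $K'$ is a perfectly good alternative decomposition.) You would still need to show that configurations admitting such a swap are asymptotically negligible, and your $\log n$ threshold does not do this on its own.

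The paper sidesteps the whole issue with a cruder but cleaner degree argument: a Chernoff bound shows that every apex vertex has degree at least $n/3$ in $G$ outside an exponentially small set of configurations, while the explicit count $\binom{n-k-2}{d-1}(n-k-1)^{n-k-d-1}$ for trees with a vertex of degree $d$ shows that configurations in which some tree vertex has degree at least $n/4$ in $G$ are also negligible. On the remaining configurations the apex set $K$ is simply the set of vertices of degree $\ge n/3$, so it is unique with no case analysis on $j$ needed. Replacing your $\log n$ threshold by a linear one and arguing via degrees in $G$ rather than cycles in $G\setminus K'$ fixes the gap.
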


\begin{proof}
We provide a matching lower bound in a similar way as in the previous lemma. Consider the set $\cS_k$ of all graphs consisting of a tree $T$ and a complete graph $K_k$ and a number of additional edges, each with one end in $T$ and the other in $K_k$, such that each leaf of $T$ is adjacent to at least one of the vertices in the complete graph. We show that almost all of these graphs belong to $\A_k$, i.e., they are $2$-connected. Later, we derive a lower bound for the number of graphs in $\cS_k$, which in turn yields a lower bound for the number of elements of $\A_k$.

Consider an element $G$ of $\cS_k$: we show that it is $2$-connected unless an exceptional situation occurs.
Indeed, if one of the tree vertices is removed, the tree decomposes into several connected components, each of which contains at least one leaf. Since each of the leaves needs to be adjacent to at least one of the vertices of the complete graph, the resulting graph is still connected. On the other hand, if one of the vertices of the complete graph $K_k$ is removed, the remaining graph consists of a tree and a complete graph $K_{k-1}$ (both of which are connected graphs), and these two graphs are still connected by at least one edge unless all edges connecting $T$ to the complete graph lead to the same vertex. Since this is the only scenario for which $G$ is not $2$-connected, we can expect most elements of $\cS_k$ to be $2$-connected. We will prove this below by obtaining the exponential generating function for the non-$2$-connected elements of $\cS_k$.

First, we can use the same reasoning that gave us~\eqref{eq:upper_est} to find that the exponential generating function for the set of graphs $\cS_k$ is
$$L_k(x) = \frac{x^k}{k!} \sum_{m \geq 1} \sum_{\substack{T \in \T \\ |T| = m}} \frac{(2^r x)^m}{m!} \Big( \frac{2^r-1}{2^r} \Big)^{\ell(T)} = \frac{x^k}{k!} t \big( 2^r x, 1-2^{-r}\big),$$
which has the same dominant singularity $\eta_k$ and an asymptotic expansion of the same form as~\eqref{eq:asymp-exp}, albeit with other coefficients. Hence the coefficients of $L_k$ satisfy an asymptotic formula of the form~\eqref{eq:asymp-formula} (with a different multiplicative constant).

However, some more care is needed to complete the proof: firstly, we need to subtract those graphs that are not $2$-connected because one of the vertices of the complete graph is an endpoint of all connecting edges. There must be an edge between this vertex and all leaves, and there might be further edges between this vertex and other tree vertices, but no other edges connecting the tree and the complete graph. Hence the exponential generating function for such graphs is
$$k \cdot \frac{x^k}{k!} \sum_{m \geq 1} \sum_{\substack{T \in \T \\ |T| = m}} \frac{x^m}{m!} 2^{m-\ell(T)} =  \frac{x^k}{(k-1)!} t \big( 2x, \tfrac12\big),$$
which has a greater radius of convergence (namely $\eta_1$) than $L_k$, hence its coefficients are negligibly small.

The second issue we need to take into consideration is the potential double-counting: for a given graph in $\A_k$, the $k$ vertices forming the complete graph may not be unique. We will show, however, that this only happens for a very small proportion of graphs in $\A_k$.

To this end, we consider the degrees of the 
vertices. Among all the possible combinations consisting of a tree, a complete graph $K_k$ and edges between the two as described above, pick one at random. The distribution of the degree of a vertex of the complete graph is almost a binomial distribution, meaning that the degree is concentrated around $\frac{n}{2}$. Consider e.g. the probability that such a vertex has degree at most $\frac{n}{3}$. The fact that the vertex is connected by an edge to all other vertices of the complete graph by default, and the fact that the probability to be connected to a leaf by an edge is slightly above $\frac12$, only decrease this probability compared to the binomial distribution. For the binomial distribution, we find the probability to be at most $e^{-n/36}$ using the Chernoff bound ${\mathbb P}[X\leq (1- \epsilon) (n/2)] \leq e^{-\epsilon^2 n/4}$ for $\epsilon=1/3$, which is exponentially small in $n$.
So the $k$ vertices that form the complete graph have a degree of at least $\frac{n}{3}$ for all but a negligible set of combinations.

On the other hand, let us estimate the number of combinations for which a tree vertex has large degree, say at least $\frac{n}{4}$. A simple upper bound will suffice: there are $\binom{n}{k}$ ways to distribute the labels, $n-k$ choices for the vertex with large degree, and at most $2^{k(n-k)}$ possibilities for the edges between tree and complete graph. The number of labelled trees with $n-k$ vertices for which a fixed vertex has degree $d$ is $\binom{n-k-2}{d-1}(n-k-1)^{n-k-d-1}$ (by \cite[(1.7.5)]{Harary1973graphical}). Hence we have the upper bound
\begin{align*}
(n-k) \binom{n}{k} 2^{k(n-k)} \sum_{d \geq n/4} \binom{n-k-2}{d-1} (n-k-1)^{n-k-d-1} &\leq \frac{n^{k+1}}{k!} 2^{k(n-k)} \cdot 2^{n-k-2} \cdot n^{n-k-n/4-1} \\
&= O (n^{3n/4} 2^{(k+1)n}),
\end{align*}
which by Stirling's formula is also negligibly small compared to $K_2n^{-5/2} \eta_k^{-n} n!$. It follows that for all but a negligible set of combinations, the vertices of the complete graph are the only vertices whose degree is at least $\frac{n}{3}$, which means that they are unique. This completes the proof of the lower bound.
\end{proof}

\begin{corollary}\label{cor:Bk}
For $k \geq 4$, the exponential generating function $B_k(z)$ associated with the set $\B_k$ of all possible blocks of graphs in $\G_k$ has radius of convergence $\eta_k$. Moreover,
$$\lim_{x \to \eta_k^-} B''_k(x) = \infty.$$
\end{corollary}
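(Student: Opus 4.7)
The plan is to split $B_k$ according to whether a block is planar or a $k$-apex forest. Let $\mathcal{P}$ denote the class of $2$-connected planar graphs, with EGF $P(z)$, and let $O_k(z)$ be the EGF of $\A_k \cap \mathcal{P}$. Then $\B_k = \A_k \cup \mathcal{P}$, and inclusion-exclusion gives
\[
B_k(z) = A_k(z) + P(z) - O_k(z).
\]
The first key fact is that the radius of convergence $\eta_P$ of $P$, known to be approximately $0.0382$ by the asymptotic enumeration of Gim\'enez and Noy, strictly exceeds $\eta_k$ for every $k \geq 4$. To verify this, I would rearrange the defining equation $2^k \eta_k = e^{\eta_k - 1}$: since $\eta_k < 1$ one obtains first the crude bound $\eta_k < 1/2^k$, and substituting this back into the same equation yields the sharper bound $\eta_k < e^{1/2^k}/(2^k e)$. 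For $k = 4$ this evaluates to less than $0.025 < \eta_P$, and $\eta_k$ is decreasing in $k$. In particular, the coefficients of $O_k$ are dominated by those of $P$, so $P - O_k$ is analytic on $|z| < \eta_P$.

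Given this, the first assertion follows immediately from the coefficient-wise inequality $A_k \leq B_k \leq A_k + P$ combined with Lemmas~\ref{lem:upper} and~\ref{lem:lower}: the lower inequality forces $\mathrm{rad}(B_k) \leq \eta_k$, while the upper inequality forces $\mathrm{rad}(B_k) \geq \min(\eta_k, \eta_P) = \eta_k$. For the statement about $B_k''$, I would write $B_k'' = A_k'' + (P - O_k)''$; since $P - O_k$ is analytic on a disk strictly containing $\eta_k$, the second summand stays bounded as $x \to \eta_k^-$, so it suffices to prove that $A_k''(x) \to \infty$. Lemma~\ref{lem:lower} supplies $[z^n] A_k \geq K_2 n^{-5/2} \eta_k^{-n}$ for all large $n$, whence $[z^n] A_k'' = n(n-1)[z^n] A_k \geq K n^{-1/2} \eta_k^{-n}$ for some $K > 0$ and all large $n$. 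For $x \in (0, \eta_k)$ this yields
\[
A_k''(x) \;\geq\; K \sum_{n \geq N_0} n^{-1/2} (x/\eta_k)^n,
\]
and since $\sum_n n^{-1/2}$ diverges, truncating at a sufficiently large $N$ and letting $x \to \eta_k^-$ makes the right-hand side arbitrarily large, completing the argument.

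The step I expect to be the main obstacle is the strict comparison $\eta_k < \eta_P$: the naive bound $\eta_k \leq 1/2^k$ already fails for $k = 4$ (since $1/16 > \eta_P$), so the sharper estimate $\eta_k < e^{1/2^k}/(2^k e)$ is really needed, and moreover one must either appeal to the Gim\'enez--Noy asymptotic for $2$-connected planar graphs or derive an explicit enough exponential upper bound on their count. Once this comparison is in hand, everything else is a routine consequence of Lemmas~\ref{lem:upper} and~\ref{lem:lower}.
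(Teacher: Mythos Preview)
Your proposal is correct and follows essentially the same approach as the paper: both use the Gim\'enez--Noy asymptotic for $2$-connected planar graphs to establish $\eta_P > \eta_k$ for $k \geq 4$, after which the radius of convergence and the divergence of $B_k''$ are immediate consequences of Lemmas~\ref{lem:upper} and~\ref{lem:lower}. Your explicit bound $\eta_k < e^{1/2^k}/(2^k e)$ is a nice addition to the paper's purely numerical comparison; the only slip is the indexing $[z^n]A_k'' = n(n-1)[z^n]A_k$ (it should read $[z^{n-2}]A_k'' = n(n-1)[z^n]A_k$), but this does not affect the conclusion.
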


\begin{proof}
We make use of the results of Gim\'enez and Noy \cite{Gimenez2009asymptotic} on the enumeration of planar graphs. Specifically, the number of $2$-connected labelled planar graphs with $n$ vertices is asymptotically given by  \cite[Theorem 9.13]{Drmota2009random}
$$p_n^{(2)} \sim \alpha n^{-7/2} \beta^{-n}n!$$
for $\alpha \approx 0.37042 \cdot 10^{-5}$ and $\beta \approx 0.03819$. Since $\beta > \eta_4 \approx 0.02354 \geq \eta_k$ for $k \geq 4$, it follows that the planar blocks form a negligible portion of $\B_k$, as the number of elements in $\A_k$ grows exponentially faster. The statement on the radius of convergence and the behaviour of the second derivative now follow immediately from the previous two lemmas.
\end{proof}

Now we are ready to complete the proof of the main theorem.

\begin{proof}[Proof of Theorem~\ref{thm:main}]

It is well known (see \cite[Section VI.7]{Flajolet2009analytic} or \cite[Section 3.1.4]{Drmota2009random}) that a function $y(x)$ that satisfies a functional equation of the form $y(x) = x \Phi(y(x))$, where $\Phi$ has positive coefficients, has a dominant square root singularity at $\rho = \tau/\Phi(\tau)$, where $\tau$ is the positive real solution to the equation $t\Phi'(t) = \Phi(t)$, provided such a solution exists inside the circle of convergence of $\Phi$.

In our situation, where we consider the exponential generating function $\C_k^{\bullet}$ of rooted connected graphs in $\G_k$, we can let $\Phi(t) = \exp(B_k'(t))$ in view of~\eqref{eq:subcritical_funeq}, so the equation $t \Phi'(t) = \Phi(t)$ reduces to $t B_k''(t) = 1$. In view of Corollary~\ref{cor:Bk}, we have $\lim_{t \to \eta_k^-} t B_k''(t) = \infty$. Since we also have $\lim_{t \to 0^+} t B_k''(t) = 0$ and $t B_k''(t)$ is continuous and increasing as a function of $t$, it follows from the intermediate value theorem that there is indeed a unique value $\tau_k \in (0, \eta_k)$ such that $\tau_k B_k''(\tau_k) = 1$. 

Consequently, $C^{\bullet}_k$  has its dominant square root singularity at $\rho_k = \tau_k \exp(-B_k'(\tau_k))$, and $C^{\bullet}_k(\rho_k) = \tau_k < \eta_k$. This proves that $\G_k$ is a subcritical family.
\end{proof}

\section{The probability of 2-connectedness} \label{sec 2 conn}

In the previous section we obtained asymptotics for the number of 2-connected $k$-apex forests with $n$ vertices. The corresponding asymptotics for the number $\Z_{k,n}$  of all $n$-vertex $k$-apex forests, not necessarily 2-connected, was determined by Kurauskas and McDiarmid \cite{KurMcDRan}:

\begin{theorem}[\cite{KurMcDRan}]\label{KMtheorem}
	$|\Z_{k,n}| \approx c_k n^{-5/2} \zeta_k^{n}n!$, where $\zeta_k=e2^k$ and $c_k = \left( 2^{k+1 \choose 2}e^k k! \right)^{-1}$.
\end{theorem}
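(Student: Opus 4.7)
The plan is to count a slightly larger labelled class and correct for the discrepancy. Let $\widehat{\Z}_{k,n}$ denote the set of pairs $(G,A)$ where $G$ is a graph on $[n]$, $A\subseteq[n]$ with $|A|=k$, and $G\setminus A$ is a forest. Such a pair is specified by: a choice of the $k$-set $A$, an arbitrary graph on $A$ (contributing $2^{\binom{k}{2}}$), arbitrary edges between $A$ and its complement (contributing $2^{k(n-k)}$), and a forest on the complement. Assembling these factors exactly as in Lemma~\ref{lem:upper} produces the exponential generating function
\[
\hat{U}_k(x) \;=\; \frac{2^{\binom{k}{2}}\,x^k}{k!}\,f(2^k x).
\]

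I would then perform singularity analysis on $\hat{U}_k$ in the spirit of the proof of Lemma~\ref{lem:upper}. The closest singularity of $f(2^k x)=\exp(t(2^k x))$ is inherited from $t$ at $2^k x=1/e$, i.e.\ at $x=1/\zeta_k$. Substituting the expansion~\eqref{eq:unrooted_expansion} and exponentiating shows that the leading non-analytic contribution of $\hat{U}_k(x)$ is a constant multiple of $(1-\zeta_k x)^{3/2}$. Transfer of coefficients via \cite[Thm.~VI.4]{Flajolet2009analytic} and assembly of the factors $2^{\binom{k}{2}}/k!$, $\eta_k^{k}=(2^{k^2}e^k)^{-1}$, $\sqrt{e}$, $\tfrac{2\sqrt{2}}{3}$ and $1/\Gamma(-3/2)=\tfrac{3}{4\sqrt{\pi}}$, together with the identity $\binom{k}{2}-k^2=-\binom{k+1}{2}$, then yield $|\widehat{\Z}_{k,n}|\sim c_k\,n^{-5/2}\zeta_k^{n}\,n!$ with the claimed constant.

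It remains to relate $|\widehat{\Z}_{k,n}|$ to $|\Z_{k,n}|$, and this is where I expect the only genuine difficulty to lie. Two discrepancies must be handled. First, apex sets of size $r<k$ do not appear in $\hat{U}_k$; each one would contribute a generating function $\frac{2^{\binom{r}{2}}x^r}{r!}f(2^r x)$ whose radius of convergence $1/(e2^r)>1/\zeta_k$ is strictly larger, so these contributions are exponentially negligible. Second, a single graph in $\Z_{k,n}$ may be paired with several different valid size-$k$ apex sets and is therefore overcounted in $\widehat{\Z}_{k,n}$; to recover the leading constant one must show that the number of multiply-represented graphs is of strictly smaller exponential order. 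For this I would imitate the argument at the end of Lemma~\ref{lem:lower}: in a uniformly random pair $(G,A)$ every apex vertex is joined to each non-apex vertex independently with probability $\tfrac12$, so a Chernoff bound forces each apex degree above $n/3$; conversely, a direct count of trees with a vertex of degree $\geq n/4$ (exactly as executed there via Cayley-type enumeration) bounds the number of pairs in which any non-apex vertex attains comparable degree, and this count is of strictly smaller exponential order. Outside this negligible exceptional set, $A$ is uniquely determined as the set of $k$ vertices of extreme degree, so pairs and graphs are in bijection and $|\Z_{k,n}|$ inherits the asymptotic of $|\widehat{\Z}_{k,n}|$. The delicate quantitative bookkeeping needed to make all the error terms genuinely $o(n^{-5/2}\zeta_k^n n!)$, rather than merely dominated term-by-term, is the step I would budget the most effort for; everything else is an application of the symbolic method and standard singularity analysis already used in Lemmas~\ref{lem:upper} and~\ref{lem:lower}.
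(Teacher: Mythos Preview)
The paper does not prove Theorem~\ref{KMtheorem}; it is quoted from Kurauskas and McDiarmid and used as a black box in Section~\ref{sec 2 conn}. There is therefore no in-paper argument to compare your proposal against.

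Your strategy is nonetheless the natural one and mirrors Lemmas~\ref{lem:upper} and~\ref{lem:lower}: enumerate marked pairs $(G,A)$ via the symbolic method, extract asymptotics by singularity analysis of $\hat U_k(x)=\frac{2^{\binom{k}{2}}x^k}{k!}f(2^kx)$, and kill the overcounting by the degree-separation argument. Two points deserve attention. First, a notational slip: where you write ``$\eta_k^{k}=(2^{k^2}e^k)^{-1}$'' you mean $\zeta_k^{-k}=(e2^k)^{-k}$; the symbol $\eta_k$ is already reserved in the paper for $T(2^{-k}e^{-1})$, which is a different number. Second, the constants do \emph{not} assemble to the stated $c_k$. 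Carrying out your computation, the coefficient of $(1-\zeta_k x)^{3/2}$ in $\hat U_k$ is $c_k\cdot\tfrac{2\sqrt{2e}}{3}$, and dividing by $\Gamma(-3/2)=\tfrac{4\sqrt{\pi}}{3}$ gives
\[
|\widehat{\Z}_{k,n}|\;\sim\;c_k\sqrt{\tfrac{e}{2\pi}}\;n^{-5/2}\zeta_k^{\,n}\,n!,
\]
off from the quoted constant by the factor $\sqrt{e/(2\pi)}$. This is not a defect in your method (the growth rate $\zeta_k$ and the polynomial correction $n^{-5/2}$ are correct), but you should check the precise statement in \cite{KurMcDRan} rather than rely on the ``$\approx$'' as transcribed here.

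Finally, the uniqueness-of-$A$ step in your last paragraph is borrowed from Lemma~\ref{lem:lower}, where the underlying object is a single tree; here it is a forest, so the high-degree count via \cite[(1.7.5)]{Harary1973graphical} needs a one-line extension (sum over the component containing the distinguished vertex). This is routine, but worth flagging since you explicitly invoke that estimate.
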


(Unlike our Lemmas~\ref{lem:upper} and~\ref{lem:lower} that leave the constants $K_1$ and $K_2$  unknown, this result provides an exact constant $c_k$.)

Comparing this with Lemmas~\ref{lem:upper} and~\ref{lem:lower} immediately yields the asymptotics of the probability for the uniform random $n$-vertex $k$-apex forest to be 2-connected:

\begin{corollary}
The uniformly random  $k$-apex forest with $n$ vertices is 2-connected with probability $P_n$ satisfying
$$P_n = \Theta(( \zeta_k \eta_k)^{-n}) = \Theta(( e2^k \eta_k)^{-n}) = \Theta(e^{-\eta_kn}),$$
where $\eta_k$ is defined as the smallest positive solution to the equation $2^k \eta = e^{\eta-1}$.
\end{corollary}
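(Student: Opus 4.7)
The plan is entirely arithmetic: form the ratio of the two asymptotic counts, cancel common factors, and simplify using the defining equation of $\eta_k$. There is no real combinatorial content beyond what has already been established.

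Concretely, write $P_n$ as the quotient of the number of $2$-connected $k$-apex forests on $n$ vertices (call this $|\A_{k,n}|$) by the total number $|\Z_{k,n}|$ of $k$-apex forests on $n$ vertices. By Lemmas~\ref{lem:upper} and~\ref{lem:lower} together, we have $|\A_{k,n}| = \Theta(n^{-5/2} \eta_k^{-n} n!)$; by Theorem~\ref{KMtheorem} (Kurauskas--McDiarmid) we have $|\Z_{k,n}| \sim c_k n^{-5/2} \zeta_k^{n} n!$ with $\zeta_k = e\, 2^k$. Dividing, the factors $n^{-5/2}$ and $n!$ cancel and the multiplicative constants collapse into the $\Theta$, yielding
$$P_n = \Theta\bigl((\zeta_k \eta_k)^{-n}\bigr) = \Theta\bigl((e\, 2^k \eta_k)^{-n}\bigr).$$

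To obtain the third expression $\Theta(e^{-\eta_k n})$ in the statement, invoke the defining equation of $\eta_k$, namely $2^k \eta_k = e^{\eta_k - 1}$. Multiplying both sides by $e$ gives $e\, 2^k \eta_k = e^{\eta_k}$, so $(e\, 2^k \eta_k)^{-n} = e^{-\eta_k n}$, which is the claimed final form.

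The only step requiring any care is making sure that the matching upper and lower bounds in Lemmas~\ref{lem:upper} and~\ref{lem:lower} really combine to a $\Theta$ bound with the same exponential base $\eta_k^{-1}$ and the same polynomial exponent $-5/2$; both lemmas give this explicitly, so nothing is actually at stake. I do not foresee any genuine obstacle, since all three equalities in the statement are literal rewritings of one another under the relation $2^k \eta_k = e^{\eta_k - 1}$.
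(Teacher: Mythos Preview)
Your proposal is correct and matches the paper's approach: the paper simply says that comparing Theorem~\ref{KMtheorem} with Lemmas~\ref{lem:upper} and~\ref{lem:lower} immediately yields the result, and you have spelled out exactly that comparison, including the simplification $e\,2^k\eta_k = e^{\eta_k}$ from the defining equation.
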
 \qed

We remark that it is not straightforward to deduce Lemmas~\ref{lem:upper} and~\ref{lem:lower} from Theorem~\ref{KMtheorem} using  the typical number of leaves of a uniform random forest and the fact that a $k$-apex forest can only be 2-connected if each leaf of its underlying forest is connected to at least one of the apex vertices: calculations show that the rate of decay of $P_n$ is not determined by the typical number of leaves of the uniform random forest with $n$ vertices (for the uniform random tree the number of leaves divided by $n$ converges in probability to $1/e$ \cite{GolSho}, and this remains true for the uniform random forest), and is hence influenced by the `unlikely' forests with much fewer leaves. 

\section{Extension to all minor-closed families of graphs} \label{secdisc}

Looking back over the above proof, we observe that very little information about the family of planar graphs was actually used: we only used the fact that it is minor-closed (in the proof of Proposition~\ref{prop_add_mc}), and that its exponential generating function has a non-zero radius of convergence (in the proof of Corollary~\ref{cor:Bk}). By a theorem of Norine, Seymour, Thomas and Wollan \cite{NSTW}, every proper minor-closed family of graphs has the property that the number $g_n$ of labelled graphs in the family satisfies the inequality
$$g_n \leq n! \cdot c^n$$
for some positive constant $c$. Hence we can immediately extend Theorem~\ref{thm:main} as follows.

\begin{theorem}\label{thm:general}
Every proper minor-closed family $\M$ of graphs is contained in a minor-closed, addable, subcritical family of graphs.
\end{theorem}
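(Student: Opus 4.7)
The plan is to generalise the construction of $\G_k$ in the obvious way, replacing ``planar'' by ``in $\M$''. Define $\G_k^{\M}$ to be the class of graphs each of whose blocks is either a member of $\M$ or a $k$-apex forest. Since blocks of a graph are induced subgraphs, and hence minors, of it, every $G\in\M$ has all blocks in $\M$ and therefore lies in $\G_k^{\M}$, giving the required containment $\M \subseteq \G_k^{\M}$.

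The first step is to verify that $\G_k^{\M}$ is addable and minor-closed. The proof of Proposition~\ref{prop_add_mc} transfers verbatim: addability only uses that a single edge is an allowed block (which it is, being a tree, hence a $0$-apex forest), and minor-closedness under vertex deletion, edge deletion and edge contraction only uses the minor-closedness of each of the two kinds of allowed blocks.

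The substantive step is to establish subcriticality for $k$ sufficiently large. Let $B_k^{\M}(z)$ denote the EGF for blocks of $\G_k^{\M}$; then $B_k^{\M}(z) = M(z) + A_k(z) - O(z)$, where $M(z)$ counts the $2$-connected members of $\M$, $A_k(z)$ is as in Lemma~\ref{lem:upper}, and $O(z)$ counts their overlap. By the Norine--Seymour--Thomas--Wollan theorem the number of labelled graphs in $\M$ on $n$ vertices is at most $c^n n!$ for some constant $c > 0$; in particular, $M(z)$ (and hence $O(z)$, whose coefficients are dominated by those of $M$) has radius of convergence at least $1/c$. Since $\eta_k = T(\tfrac{1}{2^k e}) \to 0$ as $k \to \infty$, because $T$ is continuous with $T(0)=0$, I would pick $k$ large enough that $\eta_k < 1/c$. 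Then $M(z)$ and $O(z)$ are analytic at $\eta_k$, while Lemmas~\ref{lem:upper} and~\ref{lem:lower} supply, through~\eqref{eq:asymp-exp}, the singular behaviour of $A_k(z)$ there. Consequently $B_k^{\M}(z)$ has radius of convergence exactly $\eta_k$ and a singular expansion with a term of order $(1-z/\eta_k)^{3/2}$, so that $\lim_{z \to \eta_k^-}(B_k^{\M})''(z) = \infty$; this is precisely the analogue of Corollary~\ref{cor:Bk}.

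From here the proof of Theorem~\ref{thm:main} applies verbatim and yields that $\G_k^{\M}$ is subcritical. I expect no serious obstacle: the entire original argument was engineered to use only minor-closedness and a strictly positive radius of convergence of the auxiliary family, both of which are guaranteed for any proper minor-closed $\M$ by NSTW. The only point requiring any care is the overlap term $O(z)$, but since its coefficients are dominated by those of $M(z)$, it plays no role in the singular analysis at $\eta_k$.
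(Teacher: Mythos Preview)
Your proposal is correct and mirrors the paper's own argument: replace planar blocks by blocks from $\M$, use the Norine--Seymour--Thomas--Wollan bound to give $M(z)$ (and hence $O(z)$) positive radius of convergence, choose $k$ with $\eta_k < 1/c$, and rerun Corollary~\ref{cor:Bk} and the proof of Theorem~\ref{thm:main}. One small caveat: Lemmas~\ref{lem:upper} and~\ref{lem:lower} only yield matching coefficient bounds for $A_k$, not an explicit $(1-z/\eta_k)^{3/2}$ expansion of $A_k$ itself, but those bounds already force $(B_k^{\M})''(x)\to\infty$ as $x\to\eta_k^-$, which is all that is needed.
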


\begin{proof}
Analogous to the proof of Theorem~\ref{thm:main}: consider the family of graphs whose blocks are either single edges, $2$-connected graphs in $\M$, or $2$-connected $k$-apex forests, and take $k$ large enough so that $\eta_k < 1/c$ (which is possible since $\eta_k \to 0$ as $k \to \infty$). The resulting family will be minor-closed, addable, and subcritical.
\end{proof}

In view of Theorem~\ref{thm:general}, there is little hope to achieve a full characterisation of minor-closed subcritical graph classes in terms of their forbidden minors. However, as mentioned earlier, it is still plausible that the other direction of Noy's conjecture holds, i.e. that every addable, minor-closed graph class with a forbidden planar minor is subcritical.

\bibliographystyle{abbrv}
\bibliography{Subcritical}

\end{document}